\documentclass[12pt]{article}
\title{Diophantine quintuples containing triples of the first kind}
\author{D.J. Platt\\ Heilbronn Institute for Mathematical Research \\ University of Bristol, Bristol, UK\\ dave.platt@bris.ac.uk\\ \\
and\\ \\
T.S. Trudgian\footnote{Supported by Australian Research Council DECRA Grant DE120100173.}\\
Mathematical Sciences Institute\\ The Australian National University,
 ACT 0200, Australia\\ timothy.trudgian@anu.edu.au
}
\usepackage{url}
\usepackage{amsthm}
\usepackage{amsmath}
\usepackage{fullpage}
\usepackage{enumerate}
\usepackage{amssymb}
\usepackage{booktabs}
\usepackage[boxed]{algorithm2e}
\usepackage{float}

\newtheorem{thm}{Theorem}
\newtheorem{Lem}{Lemma}

\begin{document}
\maketitle
\begin{abstract}
\noindent
We consider Diophantine quintuples $\{a, b, c, d, e\}$, sets of integers with $a<b<c<d<e$ the product of any two elements of which is one less than a perfect square. Triples of the first kind are sets $\{A, B, C\}$ with $C\geq B^{5}$. We show that there are no Diophantine quintuples $\{a, b, c, d, e\}$ such that $\{a, b, d\}$ is a triple of the first kind.
\end{abstract}

\section{Introduction}
Define a Diophantine $m$-tuple as a set of $m$ distinct positive integers $a_{1}< a_{2}< \cdots < a_{m}$ such that $a_{i} a_{j} +1$ is a perfect square for all $1\leq i<j\leq m$. For example, the set $\{1, 3, 8, 120\}$ is a Diophantine quadruple. 
Throughout the rest of this article we simply refer to $m$-tuples, and not to Diophantine $m$-tuples.

One may extend any triple $\{a, b, c\}$ to a quadruple $\{a, b, c, d_{+}\}$ where
\begin{equation*}\label{d+}
d_{+} = a + b+ c+ 2abc + 2rst, \quad
r= \sqrt{ab +1}, \quad s = \sqrt{ac +1}, \quad t = \sqrt{bc+1},
\end{equation*}
by appealing to a result by Arkin, Hoggatt and Straus \cite{AHS}. Such a quadruple $\{a, b, c, d_{+}\}$ is called a \textit{regular} quadruple. Arkin, Hoggatt and Straus conjectured that all quadruples are regular.
Note that any possible quintuple $\{a, b, c, d, e\}$ contains the quadruples $\{a, b, c, d\}$ and $\{a, b, c, e\}$. If the conjecture by Arkin, Hoggatt and Straus were true then $d_{+} = d = e$, whence $d$ and $e$ are not distinct. This implies that there are no quintuples. A partial result towards this is the following theorem, proved by Fujita \cite{fujita2009any}.

\begin{thm}[Fujita]\label{thm:fuj2}
If $\{a,b,c,d,e\}$ is a Diophantine quintuple with $a<b<c<d<e$, then $d=d_+$.
\end{thm}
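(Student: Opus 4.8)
The plan is to reduce the whole problem to simultaneous Pell equations attached to the quadruple $\{a,b,c,d\}$ and then to exploit the extra rigidity supplied by the fifth element $e$. Since $\{a,b,c,d\}$ is a quadruple there are positive integers $x,y,z$ with $ad+1=x^2$, $bd+1=y^2$ and $cd+1=z^2$. Eliminating $d$ pairwise produces the Pellian equations $cx^2-az^2=c-a$, $cy^2-bz^2=c-b$ and $bx^2-ay^2=b-a$, each of whose solutions splits into finitely many classes indexed by a binary linear recurrence governed by the fundamental unit $\sqrt{ac+1}+\sqrt{ac}$ of $X^2-acY^2=1$ (respectively the analogous units for $bc$ and $ab$). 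Consequently $z$ --- equivalently $d$ --- must lie simultaneously in two such recurrence sequences, say $z=v_m=w_n$ for suitable indices $m,n$, and the same machinery applies to $e$, which as an extension of $\{a,b,c\}$ lies in the \emph{same} union of sequences.

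First I would classify the fundamental solutions of these Pellians and write out the initial terms of $v_m,w_n$ explicitly in terms of $r,s,t$. The smallest admissible values recover the trivial extension $d=0$, the smaller regular extension $d_-=a+b+c+2abc-2rst$, and the larger one $d_+=a+b+c+2abc+2rst$, while the next term of each sequence is already enormous compared with $c$. A congruence analysis of the relation $v_m=w_n$ modulo $c$, and then modulo a higher power such as $4c^2$, should pin down the parities and residues of $m$ and $n$ and show that the first nontrivial coincidence of the two sequences with fourth element exceeding $c$ is exactly $d=d_+$. This is the heart of the identification: any solution with $d\neq d_+$ is forced to have large indices, and hence to be huge relative to $c$.

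To turn ``huge'' into a contradiction I would invoke gap principles for Diophantine tuples --- quantitative lower bounds such as $c>4ab$ and $d>4abc$ and their iterates --- now applied to the full quintuple. Here the fifth element is essential, since it is precisely what separates this theorem from the still-open Arkin--Hoggatt--Straus conjecture: the sets $\{a,b,c,e\}$, $\{a,b,d,e\}$ and $\{b,c,d,e\}$ are all quadruples, so $e$ satisfies its own battery of Pell relations, including $de+1=\square$, and in particular $d<e$ forces $e$ to be an even later term of the recurrence. If $d\neq d_+$, the recurrence analysis makes $d$ (and then $e$) so large that these over-determined conditions become mutually inconsistent, contradicting the existence of the quintuple.

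The main obstacle is the \emph{intermediate range}, where $d$ and $c$ are close enough in scale that the gap principles alone do not exclude $d\neq d_+$. I expect to handle it by applying the theory of linear forms in logarithms to $v_m=w_n$: Baker's theorem yields an absolute but astronomically large upper bound for $\max(m,n)$, and a Baker--Davenport reduction via continued fractions of the relevant ratios of logarithms brings this down to a small, computationally checkable range. The delicate point is making both the linear-forms estimate and the reduction \emph{uniform} over all admissible triples $\{a,b,c\}$, which demands careful control of the heights of the algebraic numbers involved and of the sizes of the fundamental units, after which the finitely many residual cases can be eliminated directly.
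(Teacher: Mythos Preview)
The paper does not prove this theorem at all: it is quoted as an external result of Fujita \cite{fujita2009any} and used as a black box (see the sentence ``A partial result towards this is the following theorem, proved by Fujita''). There is therefore nothing in the present paper against which to compare your attempt.

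As for the attempt itself, what you have written is a plausible high-level outline of the strategy that Fujita's original paper actually follows --- simultaneous Pellians attached to $\{a,b,c,d\}$, classification of the initial terms of the associated binary recurrences, congruence restrictions on the indices, gap principles exploiting the further extensions to $e$, and finally a Baker-type bound with Baker--Davenport reduction --- but it is only an outline, not a proof. The step you flag as ``the main obstacle'' is genuinely the crux: obtaining a bound on $\max(m,n)$ that is \emph{uniform in $a,b,c$} and then reducing it uniformly is substantial work (Fujita's paper runs to some twenty pages for precisely this reason), and phrases like ``should pin down the parities'' and ``become mutually inconsistent'' stand in for arguments that have to be carried out in detail. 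If your goal were to reproduce Fujita's theorem you would need to actually execute the congruence analysis modulo $c$ and $4c^2$, state precisely which linear form in logarithms you bound and with which version of Matveev/Baker, and explain how the reduction step is made uniform rather than case-by-case.
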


When attempting to bound the number of possible quintuples $\{a, b, c, d, e\}$ it is useful to examine the relative size of $b$ and $d$. To this end, Fujita \cite{Fujita} considered three classes of triples $\{A, B, C\}$, namely, triples of the \textit{first kind} in which $C> B^{5}$; triples of the \textit{second kind} in which $B> 4A$ and $B^{2} \leq C \leq B^{5}$; and triples of the \textit{third kind} in which $B> 12 A$ and $B^{5/3} < C < B^{2}$.

Lemma 4.2 in \cite{EFF} states that every quadruple contains a triple of the first, second, or third kind. Specifically, we have
\begin{Lem}[Lemma 4.2 in \cite{EFF}]\label{ELem}
Let $\{a, b, c, d, e\}$ be a Diophantine quintuple with $a<b<c<d<e$. Then
\begin{enumerate}
\item
$\{a, b, d\}$ is a triple of the first kind, or
\item
\begin{enumerate}[(i)]
\item $\{a, b, d\}$  is of the second kind, with $4ab + a + b \leq c \leq b^{3/2}$, or
\item $\{a, b, d\}$  is of the second kind, with $c = a+ b + 2r$, or
\item $\{a, b, d\}$  is of the second kind, with $c> b^{3/2}$, or
\item $\{a, c, d\}$ is of the second kind, with $b<4a$ and $c = a + b + 2r$, or
\end{enumerate}
\item $\{a, c, d\}$ is of the third kind, with $b<4a$ and $c = (4ab + 2)(a + b - 2r) + 2(a+b)$.
\end{enumerate}
\end{Lem}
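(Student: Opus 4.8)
The plan is to build everything on Fujita's Theorem~\ref{thm:fuj2}, which forces $d = d_+ = a + b + c + 2abc + 2rst$ and so pins $d$ down completely in terms of $a$, $b$ and $c$. The first step is to convert this exact identity into a clean two-sided estimate: since $abc < rst = \sqrt{(ab+1)(ac+1)(bc+1)}$ one gets $d > 4abc$, while expanding the product under the radical yields a matching upper bound $d < 4abc + 2(a+b+c) + \cdots$. The purpose of the estimate is that the \emph{kind} of $\{a,b,d\}$ is decided by comparing $d$ with $b^2$ and $b^5$, and the kind of $\{a,c,d\}$ by comparing $d$ with $c^{5/3}$, $c^2$ and $c^5$; through $d \approx 4abc$ every one of these comparisons becomes a comparison of $c$ against an explicit threshold built from $a$ and $b$. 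In particular $c > b$ already gives $d > 4ab^2 > b^2$, so $\{a,b,d\}$ can never be of the third kind, which is exactly why the third-kind alternative in the statement is attached to $\{a,c,d\}$.

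The second ingredient is a description of the admissible values of $c$. As $\{a,b,c\}$ is a triple with $b < c$, the integer $c$ is an extension of the pair $\{a,b\}$, and the conditions $ac+1 = \square$, $bc+1 = \square$ are governed by a Pell equation. I would invoke the standard gap principle for such extensions (as in Fujita~\cite{Fujita}): either $c$ is the minimal extension $c = a+b+2r$, or $c \ge 4ab + a + b$, the smallest non-minimal extension having the closed form $c = (4ab+2)(a+b-2r) + 2(a+b)$. A short computation confirms $a+b+2r < 4ab+a+b$, so the minimal extension lies strictly below every other admissible value; this dichotomy is what produces the separation between the tier $c = a+b+2r$ and the tiers with $c \ge 4ab+a+b$ appearing in the statement.

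With these two inputs the argument reduces to a case split on the size of $b$ relative to $a$. If $b > 4a$, I would classify $\{a,b,d\}$: the estimate above gives $d > b^2$ automatically, so the triple is of the first kind exactly when $d > b^5$ (alternative~1), and otherwise of the second kind, the position of $c$ in the three tiers $c = a+b+2r$, $4ab+a+b \le c \le b^{3/2}$, $c > b^{3/2}$ selecting subcases (ii), (i) and (iii) respectively. If instead $b < 4a$, then $\{a,b,d\}$ cannot be of the second kind, and in this regime the extensions grow quickly enough that every extension beyond the first two already forces $d > b^5$; hence either $d > b^5$ (again alternative~1), or $c$ equals $a+b+2r$ or the closed-form second extension. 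In the first of these I would verify $c > 4a$ and $c^2 \le d \le c^5$ to conclude that $\{a,c,d\}$ is of the second kind, giving subcase (iv); in the second I would verify $c > 12a$ and $c^{5/3} < d < c^2$ to conclude that $\{a,c,d\}$ is of the third kind, giving alternative~3.

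The step I expect to be the main obstacle is making these threshold inequalities \emph{exact} rather than merely asymptotic. Near the boundaries $d = b^5$, $d = c^2$ and $d = c^{5/3}$ the lower-order terms $a+b+c+2rst$ in $d = d_+$ genuinely matter, so each boundary must be settled by carefully bounding $rst = \sqrt{(ab+1)(ac+1)(bc+1)}$ instead of using the crude $rst \approx abc$. The other delicate point is the gap principle itself: excluding any admissible $c$ strictly between consecutive tiers is a Pell-equation argument showing that successive solutions are spaced by a factor that grows with $ab$. Once these two quantitative facts are in place, reading off the correct kind within each tier is routine.
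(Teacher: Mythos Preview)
The paper does not prove this lemma at all: it is quoted verbatim as Lemma~4.2 of Elsholtz--Filipin--Fujita \cite{EFF} and used only as a black box to organise the subsequent case analysis. There is therefore no in-paper argument against which to compare your proposal.

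That said, your outline is a reasonable reconstruction of how such a classification is obtained and matches the ingredients one finds in \cite{Fujita,EFF}: Fujita's $d=d_+$ giving $d>4abc$, the Pell gap principle forcing $c=a+b+2r$ or $c\ge 4ab+a+b$, and a split on whether $b>4a$. One small point worth making explicit is that the boundary $b=4a$ cannot occur, since $ab+1=4a^2+1$ is never a perfect square for $a\ge 1$; this is why the dichotomy $b>4a$ versus $b<4a$ is exhaustive. To check that the finer threshold verifications (e.g.\ that in the $b<4a$ regime every extension beyond the second already forces $d>b^5$, or that $c^{5/3}<d<c^2$ holds for the closed-form second extension) go through exactly as you describe, you would need to consult \cite{EFF} directly.
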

Lemma \ref{ELem} allows us to take aim at various triples: to prove there are no quintuples one need only prove the nonexistence of quintuples containing each of the kinds of triples listed in Lemma \ref{ELem}. The aim of this paper is to show that there are no quintuples satisfying part 1 of Lemma \ref{ELem}.

The current best estimate\footnote{The second author has announced \cite{TrudgianQ} that there are at most $2.1\cdot 10^{29}$ quintuples, and that there are no quintuples of the kind 3 in Lemma \ref{ELem}.}, by Elsholtz, Filipin and Fujita \cite{EFF}, is that there are at most $6.8\cdot 10^{32}$ quintuples comprising
\begin{itemize}
\item $5.05\cdot 10^{15}$ possible quintuples derived from triples of the first kind,
\item $6.72\cdot 10^{32}$ possible quintuples derived from triples of the second kind, and
\item $1.92\cdot 10^{26}$ possible quintuples derived from triples of the third kind.
\end{itemize}

Attempts to bound the total number of quintuples have used a result by Matveev \cite{Matveev} on linear forms of logarithms. This is not required for triples of the first kind, which greatly simplifies the exposition. Rather, the bounds on $b$ and $d$ come from work of Filipin and Fujita \cite{FilFuj} wherein so-called gap principles between solutions of Pell's equation are considered.

In \S \ref{s2} we improve Fujita and Filipin's proof slightly. This enables us to perform computations in \S \ref{s4} that prove:

\begin{thm}\label{Main}
There are no Diophantine quintuples $\{a, b, c, d, e\}$ such that $\{a, b, d\}$ is a Diophantine triple with $d>b^{5}$.
\end{thm}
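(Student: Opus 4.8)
The plan is to follow the Pell-equation strategy of Filipin and Fujita: convert the hypothesis $d>b^5$ into an explicit upper bound for $b$, sharpen that bound just enough in \S\ref{s2}, and then rule out the finitely many surviving cases by an exhaustive computation in \S\ref{s4}. I would start from the square conditions $ab+1=\square$, $ad+1=\square$, $bd+1=\square$ of the triple $\{a,b,d\}$, together with the conditions $aw+1=\square$, $bw+1=\square$, $dw+1=\square$ satisfied by each further element $w\in\{c,e\}$ of the quintuple. Eliminating $w$ pairwise produces Pellian equations such as $dx^2-az^2=d-a$ (with $aw+1=x^2$, $dw+1=z^2$), whose integer solutions split into finitely many binary recurrence sequences; the genuine extensions $w$ are exactly the common terms of these sequences. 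Tracking the index at which such a common term can appear, and comparing consecutive terms, yields the Filipin--Fujita gap principle---a lower bound on the next extension in terms of $a,b,d$---which, set against the regularity relation $d=d_+$ of Theorem~\ref{thm:fuj2} for the quadruple $\{a,b,c,d\}$ and the first-kind inequality $d>b^5$, already confines $b$ to an explicit finite range.

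The arithmetic heart of the argument is the estimate on the index of the extension together with the constants in the gap principle. In \S\ref{s2} I would revisit this step and tighten it slightly: a sharper lower bound for the relevant recurrence term, or tighter control of the fundamental solutions of the Pell equations above, lowers the admissible range of $b$ from the value implicit in \cite{EFF} (which accounts for the $5.05\cdot10^{15}$ candidate quintuples of the first kind) to a range small enough to be searched directly. Because $d>b^5$ makes these gap estimates strongest, the first kind is precisely the case where such a sharpening is most effective, and no appeal to Matveev's theorem on linear forms in logarithms is needed.

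Finally, \S\ref{s4} is the verification. For each admissible $(a,b)$ in the box produced by \S\ref{s2} I would enumerate the finitely many $d>b^5$ for which $\{a,b,d\}$ is a triple, and for each such $d$ test whether the simultaneous square conditions defining $c$ and $e$ can hold, using an exact {\issquare} test and the recurrence structure to keep the search exhaustive rather than heuristic. I expect the main obstacle to be quantitative rather than structural: the bound from \S\ref{s2} must be driven low enough that the two- (or three-) parameter search is genuinely feasible, while every inequality is made fully explicit, so that the absence of solutions over the finite box constitutes a rigorous proof and not merely numerical evidence.
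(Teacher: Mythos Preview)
Your outline is essentially the paper's strategy: invoke the Filipin--Fujita gap-principle inequalities (no linear forms in logarithms), sharpen the constants slightly to drive the bound on $b$ down (the paper splits into $b\ge 2a$, where already $b\le 50$, and $b<2a$, where the improvement is from $10^{10}$ to $1.3\cdot10^{9}$), and then compute exhaustively. The one substantive divergence is the final elimination. You propose, for each surviving $(a,b,d)$, to search directly for $c$ and then for $e$ via {\issquare} tests along the Pell recurrences. The paper never searches for $e$: it enumerates $c$ via Pell, sets $d=d_{+}$ by Theorem~\ref{thm:fuj2}, filters by inequality~(\ref{lara}) to an initial list of about $10.8$ million quadruples, and then kills every survivor with a second, sharper inequality~(\ref{russell}) whose constants $\gamma_{1},\gamma_{2},\gamma_{3}$ are recomputed \emph{per quadruple} from~(\ref{boycott}) and~(\ref{ranji}). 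This per-specimen refinement (Algorithm~\ref{alg:2}) is what the paper flags as its new contribution, and it is what lets the computation close in minutes rather than requiring any bound on $e$. Your direct-search alternative could in principle also succeed, but as written it is not yet a proof: to make the search for $e$ exhaustive you must supply and justify an explicit upper bound for $e$ in terms of $a,b,c,d$, which you do not mention.
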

Independently of this work, Cipu \cite{Cipu} has considered the same problem. Indeed, his Theorem 1.1 states that ``the quadruple left after removing the largest entry of a Diophantine quintuple contains no standard triple of the first kind''. This is somewhat stronger than our Theorem \ref{Main}, though either result may be used to eliminate the triples in part 1 of Lemma \ref{ELem}. Moreover, Cipu shows \cite[Thm.\ 1.1]{Cipu} that there are at most $10^{31}$ quintuples.

 We were only made aware of this result when this paper was essentially completed. Elements of Cipu's paper, such as the reliance on inequalities of the form (\ref{hammond}) and (\ref{lara}) are similar to ours. Our computational approach is different, and, in particular, our Algorithm \ref{alg:2} introduces some new ideas to the field.

\subsection*{Acknowledgements}
We are grateful to Mihai Cipu, Andrej Dujella, Christian Elsholtz, Alan Filipin, Yasutsugu Fujita, Alain Togb\'{e} for useful discussions and for making us aware of the work by Cipu. 

\section{Bounds considered by Filipin and Fujita}\label{s2}
If a double or a triple cannot be extended to form a quintuple, we need not consider them in what follows. We call such doubles and triples \textit{discards}. Many discards are known; we require only a few. The first, due to Fujita \cite{Fujitak} (see also \cite{Bug}), that shows that the double $\{k, k+2\}$ is a discard. In addition, Filipin, Fujita and Togb\'{e} \cite[Cor.\ 1.6, 1.9]{FFT} proved that the following are discards for $k\geq 1$
\begin{equation}\label{neck1}
\begin{split}
&\{3k^{2} - 2k, 3k^{2} + 4k + 1\}, \quad\quad \{2(k+1)^{2} - 2(k+1), 2(k+1)^{2} + 2(k+1)\}, \\
& \{(k + 1)^2 - 1, (k + 1)^2 + 2 (k + 1)\}, \quad\quad \{k, 4k +4\}.
\end{split}
\end{equation}

For the following arguments we make frequent reference to Lemma 2.4 and (2.9) in \cite{FilFuj}. These rely on a property denoted as `Assumption 2.2' concerning the relations between the indexed solutions of the Pellian equations associated with the hypothetical quintuple. For ease of exposition, we do not write out the details of Assumption 2.2; we merely note that, for its application to quintuples containing triples of the kind in 1 in Lemma \ref{ELem}, it is satisfied as per \cite[p.\ 303]{FilFuj}.   

 We also note, for the convenience of the reader, that $a'$ in \cite[(2.9)]{FilFuj} is defined as $a' = \max\{b-a, a\}.$ We assume that $\{a, b, d\}$ is a triple with $d> b^{5}$. We consider two cases according as $b\geq 2a$ or $b<2a$.
\subsection{When $b\geq 2a$}\label{first}
Combining (2.9) and Lemma 2.4(i) in \cite{FilFuj} we have
\begin{equation}\label{hammond}
\frac{0.178 a^{1/2} d^{1/2}}{4b} < \frac{\log (4.001 a^{1/2} (b-a)^{1/2} b^{2} d) \log(1.299 a^{1/2} b^{1/2} (b-a)^{-1} d)}{\log (4bd) \log(0.1053ab^{-1} (b-a)^{-3} d)}.
\end{equation}
It is easy to see that both factors of the numerator and the second factor of the denominator are increasing in $a$ for $b\geq 2a$. Hence, given that $1\leq a \leq b/2$, the right side of (\ref{hammond}) is bounded above by
\begin{equation}\label{dravid}
\frac{\log (2.0005b^{3} d) \log(1.8371d)}{\log (4bd) \log(0.1053b^{-4} d)}.
\end{equation}
It is easy to see that $(\log A_{1} x)(\log A_{2} x)/((\log A_{3} x)(\log A_{4} x))$ is decreasing in $x$ if $A_{1}> A_{3}$ and $A_{2}> A_{4}$. Since $b\geq 2$ it follows that (\ref{dravid}) is decreasing in $d$, and since $d> b^{5}$, we have from (\ref{hammond}) that 
\begin{equation}\label{pollock}
b^{3/2} < \frac{4}{0.178} \frac{\log (2.0005b^{8}) \log(1.8371b^{5})}{\log (4b^{6}) \log(0.1053b)}.
\end{equation}
We find that (\ref{pollock}) holds provided that $b\leq 50$; in \cite{FilFuj} the bound derived is $b\leq 52$. Thus we need only consider those pairs $\{a, b\}$ for which $1 \leq a \leq b/2 \leq 25$. We enumerate these pairs, and test them against the inequality in (\ref{hammond}). After discarding pairs containing $\{k, k+2\}$ and those doubles in (\ref{neck1}) we find that the only possibilities are
\begin{equation}\label{stumped}
\{1, 15\}, \quad \{1, 24\},\quad \{1, 35\},\quad \{2, 24\},\quad  \{3, 21\}.
\end{equation}
For each of these five doubles we insert values of $a, b$ into (\ref{hammond}), and solve for $d$. For example, with $\{1, 15\}$ we find that $d\leq 5.2\cdot 10^{6}$. We now search for all those $d$ with $15^{5}< d \leq 5.2\cdot 10^{6}$ such that $\{1, 15, d\}$ is a triple. We find only one such value of $d$, namely $d = 2030624$. We now search for all those $c\in(15, 2030624)$ such that $\{1, 15, c, 2030624\}$ is a quadruple. This yields exactly one value of $c$, namely $c = 32760$. Thus, one possible quadruple is $\{1, 15, 37260, 2030624\}$. We continue in this way with each of the doubles in (\ref{stumped}). We find that the only possible quadruples are
\begin{equation}\label{qantas}
\{1, 15, 37260, 2030624\}, \quad \{1, 24, 148995, 14600040\}, \quad \{1, 35, 494208, 70174128\}.
\end{equation}

Note that each of the quadruples in (\ref{qantas}) has $a=1$. The proof of Lemma 2.4(i) in \cite{FilFuj}, that leads to the `0.178' on the right side of (\ref{hammond}) can be improved significantly if it is known that $a=1$. Indeed, if one there assumes $n\leq 0.45 b^{-1} c^{1/2}$ one obtains (2.4) in \cite{FilFuj} as well as the desired contradiction, as before. Thus, for $a=1$ one may replace the `0.178' in (\ref{hammond}) by `0.45'. It now follows that the double $\{1, 15\}$ when extended to a triple $\{1, 15, d\}$ must have $d< 2\cdot 10^{6}$, whence we eliminate the first quadruple in (\ref{qantas}). The other two quadruples are similarly eliminated. We conclude that there are no quintuples with $b\geq 2a$ for which $\{a, b, d\}$ is a triple of the first kind.
\subsection{When $b< 2a$}\label{second}
In this case we combine (2.9) and Lemma 2.4(iii) in \cite{FilFuj} and obtain
\begin{equation}\label{lara}
\frac{a^{-1/2} d^{1/8}}{4} < \frac{\log (4.001ab^{2}d) \log(1.299a^{1/2} b^{1/2} (b-a)^{-1} d)}{\log (4bd) \log(0.1053d b^{-1} (b-a)^{-2})}.
\end{equation}
We need a lower bound on $b-a$: given Fujita's result that $\{k, k+2\}$ cannot be extended to a quintuple, we can write $b-a\geq 3$. Since, again, the logarithms dependent upon $a$ in (\ref{lara}) are increasing with $a$, and since $a+ 3 \leq b <2a$ we rewrite (\ref{lara}) as
\begin{equation}\label{turner}
b^{1/8} < 4 \frac{\log (4.001 b^{8}) \log(0.433 b^{6})}{\log(4b^{6})\log(0.4212 b^{2})}.
\end{equation}
We find that (\ref{turner}) is true provided that $b\leq 4.69\cdot 10^{9}$. 
In fact, we can squeeze a little more out of the argument. Let $\alpha\in[1/2, 1)$ be a parameter to be chosen later. Then, for $a \leq \alpha b$ we have
\begin{equation}\label{steve}
b^{1/8} < 4\alpha^{1/2} \frac{\log (4.001 \alpha b^{8}) \log(	1.299 \alpha^{1/2} b^{6}(1-\alpha)^{-1})}{\log(4b^{6})\log(0.4212b^{2})},
\end{equation}
whereas, for $a> \alpha b$ we have
\begin{equation}\label{mark}
b^{1/8} < 4 \frac{\log (4.001 b^{8}) \log(0.433 b^{5})}{\log(4b^{6})\log(0.1053 b^{2}(1-\alpha)^{-2})}.
\end{equation}
Therefore $b^{1/8}$ is less than the maximum of the right sides in (\ref{steve}) and (\ref{mark}). We find that choosing $\alpha= 0.9862$ gives $b \leq 1.3\cdot 10^{9}$. Filipin and Fujita \cite{FilFuj} proved that $b< 10^{10}$: while our improvement is only slight, it makes the problem computationally tractable.

Filipin and Fujita proved also that $d< b^{9}$; this was improved in \cite{EFF} to $d< b^{7.7}$. We use the weaker bound $d<b^{8}$ for computational convenience.

We hope now to search for possible quadruples $\{a, b, c, d\}$ with $\{a,b,d\}$ a triple of the first kind. We first enumerate all double $\{a, b\}$ with $a\geq 1$, $a+2<b<2a$ and $b \leq 1.3\cdot 10^{9}$. For each such doubles we enumerate all $c$ where $c<b^{8}$ such that $\{a, b, c\}$ is a triple. We now appeal to Theorem \ref{thm:fuj2} and compute $d=d_+$. If $b^5<d<b^8$ and $a$, $b$ and $d$ satisfy inequality (\ref{lara}), then we add the quadruples to our \textit{initial list}.

\subsection{Specific bounds for $d$}
There are a little under 11 million quadruples in our initial list (details are given in \S \ref{s4}). We now propose a criterion against which to test these specimens.

Each quadruple $\{a, b, c, d\}$ gives rise to a system of Pellian equations the solutions to which are indexed by integers $m$ and $n$ --- see \cite[pp.\ 294-295]{FilFuj}. One obtains tighter bounds by showing that $m$ and $n$ must be of roughly the same size. Implicit in the proof of Lemma 2.3 in \cite{FilFuj} is the following problem. Given $\{a, b, d\}$ a triple of the first kind, and given $m, n$ with $m\geq 3$ and $n\geq 2$ we want to find good bounds on
\begin{equation}\label{boycott}
\frac{m}{n} < \frac{\log(2.001^{2} b d) + \frac{1}{n} \log(1.994 a^{1/2} b^{-1/2})}{\log(1.994^{2} a d)} = w(a, b, d, n),
\end{equation}
say.
Note that  $w$ is decreasing with $n$. Let $n_{0}$ be the smallest value of $n$ such that $(n_{0}+1)/n_{0} < w(a, b, d, n_{0}) = \gamma_{1}$, say.

Now let $\gamma_{2}$ be any number satisfying $\gamma_{2} > \gamma_{1}^{2}$, and let $\gamma_{3}$ be any number less than
\begin{equation}\label{ranji}
\frac{1}{\gamma_{1}} \sqrt{1 + \frac{1}{ad}} \left(\sqrt{\frac{\gamma_{2} ad +1}{ad +1}} - \gamma_{1}\right)=v(a,d,\gamma_1,\gamma_2).
\end{equation}
Filipin and Fujita have $\gamma_{1} = 1.2, \gamma_{2} = 1.45, \gamma_{3} = 0.0033$. This gives them a criterion against which to test quadruples provided that $b\geq 1.45 a$. We should like to take $\gamma_{2}$ to be less than 1.45, so that we can test more quadruples. We do this in the following lemma.

\begin{Lem}
Assume that $\gamma_{2} a \leq b < 2a$, and that $\{a, b, d\}$ is a triple with $d>b^{5}$. Then
\begin{equation}\label{russell}
\frac{\gamma_{3} a^{1/2} b^{-1} d^{1/2}}{4} < \frac{\log (4.001ab^{2}d) \log(1.299a^{1/2} b^{1/2} (b-a)^{-1} d)}{\log (4bd) \log(0.1053d b^{-1} (b-a)^{-2})}.
\end{equation}
\end{Lem}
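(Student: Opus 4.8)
The plan is to establish the inequality (\ref{russell}) by comparison with the already-derived inequality (\ref{lara}), the only difference between them being the constant in front of the left-hand side: (\ref{lara}) has the bare factor $1/4$ whereas (\ref{russell}) has $\gamma_3/4$. Since the right-hand sides of the two inequalities are identical, it suffices to show that under the stated hypotheses the left-hand side of (\ref{russell}) is dominated by the left-hand side of (\ref{lara}) in a way that respects the derivation — that is, to re-run the argument leading to (\ref{lara}) with the sharper constant $\gamma_3$ replacing the crude $a^{-1/2}/4$ estimate, using the extra hypothesis $\gamma_2 a \le b$.

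First I would recall the origin of (\ref{lara}): it combines (2.9) and Lemma 2.4(iii) of \cite{FilFuj}, and the factor $a^{-1/2}d^{1/8}/4$ there is a lower bound on the quantity $m/n$ (or a closely related ratio). The refinement is to exploit the gap-principle machinery surrounding $w(a,b,d,n)$ and $v(a,d,\gamma_1,\gamma_2)$ set up just before the lemma. Specifically, I would let $n_0$ be the least $n$ with $(n_0+1)/n_0 < w(a,b,d,n_0) = \gamma_1$, note that $w$ is decreasing in $n$ so that $m/n$ is pinned near $\gamma_1$, and then use (\ref{ranji}) to produce the constant $\gamma_3 < v(a,d,\gamma_1,\gamma_2)$ as a valid lower bound for the coefficient multiplying $a^{1/2}b^{-1}d^{1/2}/4$. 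The hypothesis $\gamma_2 a \le b$ is exactly what is needed to ensure $\gamma_2 > \gamma_1^2$ can be arranged and that the quantity $v(a,d,\gamma_1,\gamma_2)$ in (\ref{ranji}) is positive and yields a usable $\gamma_3$; this replaces Filipin and Fujita's requirement $b \ge 1.45a$ by the weaker $b \ge \gamma_2 a$.

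The key steps, in order, are: (i) fix a quadruple with $\gamma_2 a \le b < 2a$ and $d > b^5$, and recall the Pellian indexing by $m,n$ with $m \ge 3$, $n \ge 2$; (ii) invoke (\ref{boycott}) to bound $m/n$ above by $\gamma_1 = w(a,b,d,n_0)$, using that $w$ is decreasing; (iii) translate this control on $m/n$, via the relation between the index $m$ and the size of the associated Pell solution, into the lower bound $\gamma_3 a^{1/2} b^{-1} d^{1/2}/4$ for the left-hand member, where $\gamma_3 < v(a,d,\gamma_1,\gamma_2)$ as defined in (\ref{ranji}); and (iv) combine with the upper bound on the same quantity furnished by (2.9) and Lemma 2.4(iii) of \cite{FilFuj}, which is precisely the right-hand side of (\ref{russell}), identical to that of (\ref{lara}).

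The main obstacle I anticipate is step (iii): making rigorous the passage from the ratio bound $m/n < \gamma_1$ to the explicit constant $\gamma_3$ via the square-root expression in (\ref{ranji}). This requires carefully tracking how the near-equality of $m$ and $n$ feeds into the lower bound for the linear form, and verifying that the perturbation $\sqrt{1 + 1/(ad)}$ and the ratio $\sqrt{(\gamma_2 ad + 1)/(ad+1)}$ genuinely produce a constant exceeding any fixed $\gamma_3$ once $d$ is large (here $d > b^5$ is comfortably large). One must also confirm that the monotonicity claims — $w$ decreasing in $n$, and the admissibility of choosing $\gamma_2 > \gamma_1^2$ under $\gamma_2 a \le b < 2a$ — hold uniformly over the range of quadruples in the initial list, so that a single choice of $(\gamma_1,\gamma_2,\gamma_3)$ serves as a legitimate test criterion in \S\ref{s4}.
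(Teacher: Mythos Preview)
Your approach is essentially the paper's: use the gap-principle constant $\gamma_3$ from (\ref{ranji}) together with $\gamma_2>\gamma_1^2$ to get a lower bound on $n$ of the shape $\gamma_3 a^{1/2}b^{-1}d^{1/2}$, then combine with (2.9) in \cite{FilFuj} to obtain the log-ratio on the right of (\ref{russell}). One correction: the argument you should be mirroring is Lemma~2.4(ii) of \cite{FilFuj} (the $b\ge 1.45a$ case with constant $0.0033$), not Lemma~2.4(iii); part~(iii) is what produces the cruder exponent $d^{1/8}$ in (\ref{lara}), whereas here you want the $d^{1/2}$ shape, and the paper explicitly says the proof ``follows exactly the same lines as in the proof of Lemma 2.4(ii)''. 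Also, the right-hand side of (\ref{russell}) comes from (2.9) alone; Lemma~2.4 supplies only the left-hand side (the lower bound on $n$), so your step~(iv) slightly mislabels the source of the upper bound.
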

\begin{proof}
Using (\ref{ranji}), and that fact that $\gamma_{2}> \gamma_{1}^{2}$, the proof follows exactly the same lines as in the proof of Lemma 2.4(ii) in \cite{FilFuj}. We complete the proof by combining the bound on $n$ with (2.9) in \cite{FilFuj}.
 \end{proof}

\section{Computations}\label{s4}
We first present a simple result on triples.
\begin{Lem}\label{lem:comp}

Let $a,b,r$ be positive integers with $a<b$ such that $ab+1=r^2$. That is, $\{a,b\}$ is a double. Then all admissible $c_n$ such that $\{a,b,c_n\}$ is a triple are of the form
\begin{equation*}
c_n=\frac{x_n^2-1}{a}=\frac{y_n^2-1}{b}
\end{equation*}
where $c_n$ is an integer and the $x_n,y_n$ are integer solutions to
\begin{equation}\label{eq:pell1}
bx^2-ay^2=b-a.
\end{equation}
\begin{proof}
For $c_n$ to be admissible, we have
\begin{equation*}
ac_n=x^2-1
\end{equation*}
and
\begin{equation*}
bc_n=y^2-1
\end{equation*}
for some integers $x,y$. We now simply eliminate $c_n$.
\end{proof}
\end{Lem}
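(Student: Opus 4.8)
The plan is to unpack the definition of an admissible $c_n$ and then perform the elimination indicated in the statement. Recall that $\{a,b,c_n\}$ is a triple precisely when each pairwise product is one less than a perfect square. Since $\{a,b\}$ is already a double, the two remaining conditions are that $ac_n+1$ and $bc_n+1$ are both perfect squares. First I would record these as $ac_n+1=x_n^2$ and $bc_n+1=y_n^2$ for some positive integers $x_n,y_n$, which immediately rearranges to the two expressions $c_n=(x_n^2-1)/a$ and $c_n=(y_n^2-1)/b$ claimed in the lemma.

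Next I would eliminate $c_n$. Equating the two displayed fractions and clearing denominators gives $b(x_n^2-1)=a(y_n^2-1)$, and collecting the constant terms yields $bx_n^2-ay_n^2=b-a$, which is exactly the Pellian equation \eqref{eq:pell1}. This establishes that every admissible $c_n$ produces a solution $(x_n,y_n)$ of \eqref{eq:pell1}. Conversely, I would note that any integer solution $(x,y)$ of \eqref{eq:pell1} for which $a\mid x^2-1$ (equivalently $b\mid y^2-1$, the two divisibility conditions being forced to agree by the equation itself) recovers a common integer value $c_n=(x^2-1)/a=(y^2-1)/b$, so the correspondence runs in both directions. This converse is what justifies the phrase ``all admissible $c_n$ are of the form'' and is worth spelling out even though it follows by simply reversing the elimination.

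There is no substantive obstacle here: the result is immediate once the defining conditions are written down. The only point requiring a moment's care is the integrality bookkeeping, namely verifying that the single value $c_n$ is simultaneously an integer when divided by $a$ and by $b$; this is guaranteed by \eqref{eq:pell1}, since $bx^2-ay^2=b-a$ forces $b\mid y^2-1$ whenever $a\mid x^2-1$, so the two representations of $c_n$ are consistent. I would therefore expect the proof to consist essentially of the two-line elimination already sketched, together with a brief remark closing the loop for the converse direction.
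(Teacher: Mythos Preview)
Your proposal is correct and follows exactly the same approach as the paper's proof: write $ac_n+1=x_n^2$, $bc_n+1=y_n^2$, and eliminate $c_n$ to obtain the Pellian equation. The paper's proof is terser (it omits your converse discussion and the integrality bookkeeping), but the argument is identical.
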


We now require an efficient algorithm to identify low-lying solutions to (\ref{eq:pell1}). We start by dividing throughout by $g=(a,b)$ to get
\begin{equation*}
b^\dagger x^2-a^\dagger y^2=b^\dagger -a^\dagger
\end{equation*}
and then write $X=b^\dagger x$, $D=a^\dagger b^\dagger$ and $N=b^\dagger(b^\dagger -a^\dagger)$ to get
\begin{equation}\label{eq:Pell}
X^2-Dy^2=N.
\end{equation}
Since $D$ is not a square by construction, this is an example of Pell's equation which has been widely studied (see, for example, \cite{LevequeBook}). To find solutions, we first use Lagrange's PQa algorithm to find $(u,v)$, the fundamental solution to
\begin{equation*}\label{eq:Pellp}
X^2-Dy^2=1.
\end{equation*}

We then use brute force\footnote{Theorem 6.2.5 of \cite{Mollin1997} gives upper bounds on such a brute force approach.} to locate all the fundamental solutions to (\ref{eq:Pell}). Each such fundamental solution potentially gives rise to two infinite sequences of solutions, one generated by
\begin{equation*}
X_{n+1}+y_{n+1}\sqrt{D}=(X_n+y_n\sqrt{D})(u+v\sqrt{D})
\end{equation*}
and the other by
\begin{equation*}
X_{n+1}+y_{n+1}\sqrt{D}=(X_n+y_n\sqrt{D})(u-v\sqrt{D}).
\end{equation*}

Thus we have a series of recurrence relations, indexed by $i$ that will generate all possible solutions, which we denote
\begin{equation*}
x_{i,n+1}=fx_i(x_{i,n},y_{i,n}) \quad \textrm{and} \quad
y_{i,n+1}=fy_i(x_{i,n},y_{i,n})
\end{equation*}
for $i\in[0,I-1]$.

We can now proceed as described in Algorithm \ref{alg:1} to find all quadruples $\{a,b,c,d\}$ with $0<a<b<c<d$, $a+2<b<2a$, $b<1.3\cdot 10^9$ and $b^5<d<b^8$ that satisfy inequality (\ref{lara}).

\begin{algorithm}\label{alg:1}
\SetAlgoLined
\For{$r\leftarrow 2$ \KwTo $1.3\cdot 10^9$}{
   $w\leftarrow r^2-1$\;
   \For{$a|w$, $a<r-1$}{
      $b\leftarrow w/a$\;
      \lIf{$b\geq 2a$ {\bf or} $b>1.3\cdot 10^9$} {{\bf continue}}
      Solve Pell's equation $bx^2-ay^2=b-a$ to get $I$ base solutions\;
      \For{$i\in[0,I-1]$}{
         $x\leftarrow x_{i,0}$\;
         $y\leftarrow y_{i,0}$\;
         $c\leftarrow \frac{x^2-1}{a}$\;
         \While{$c\leq b$}{
            $t\leftarrow fx_i(x,y)$\;
            $y\leftarrow fy_i(x,y)$\;
            $x\leftarrow t$\;
            $c\leftarrow \frac{x^2-1}{a}$\;}
         $d\leftarrow a+b+c+2abc+2r\sqrt{ac+1}\sqrt{bc+1}$\;
         \While{$d\leq b^5$}{
            $t\leftarrow fx_i(x,y)$\;
            $y\leftarrow fy_i(x,y)$\;
            $x\leftarrow t$\;
            $c\leftarrow \frac{x^2-1}{a}$\;
            $d\leftarrow a+b+c+2abc+2r\sqrt{ac+1}\sqrt{bc+1}$\;}
         \While{$d<b^8$}{
            \If{$c\in\mathbb{Z}$ {\bf and} $a,b,d$ {\rm satisfy inequality (\ref{lara})}}{{\bf output} $\{a,b,c,d\}$}
            $t\leftarrow fx_i(x,y)$\;
            $y\leftarrow fy_i(x,y)$\;
            $x\leftarrow t$\;
            $c\leftarrow \frac{x^2-1}{a}$\;
            $d\leftarrow a+b+c+2abc+2r\sqrt{ac+1}\sqrt{bc+1}$}}}}
\caption{Producing the initial list.}
\end{algorithm}

We note that iterating over the divisors of $r^2-1$ is more efficient than iterating over $a$ and $b$ and testing whether $ab+1$ is a perfect square. When factoring $r^2-1$ for this purpose, we first factor $r\pm 1$ and merge the results.
 
We implemented Algorithm \ref{alg:1} in Pari \cite{Pari} (for the factoring) and in `C' using GMP \cite{GMP2013} (for everything else). We ran it on $5$ nodes of the University of Bristol Bluecrystal Phase III cluster \cite{ACRC2014} each of which comprises two $8$ core Intel Xeon E5-2670 CPUs running at $2.60$ GHz. The total time used across these nodes was about $40$ hours.

There are $4,038,480,906$ pairs $\{a,b\}$ with $ab+1$ a perfect square, $a+2<b<2a$ and $b\leq 1.3\cdot 10^9$. From these pairs, we obtained $12,115,454,363$ potential quadruples with $b<c<d$ and $b^5<d<b^8$. Applying inequality (\ref{lara}) eliminated all but $10,811,817$ of these quadruples and these survivors formed our initial list.

We now apply Algorithm \ref{alg:2}, based on inequalities (\ref{boycott}), (\ref{ranji}) and (\ref{russell}), to prune our initial list. To illustrate, consider the quadruple $\{a,b,c,d\}=\{8,15,21736,10476753\}$ which survives Algorithm \ref{alg:1}. We have 
\begin{equation*}
\frac{31}{30}>w(a,b,d,30)
\end{equation*}
but
\begin{equation*}
\frac{32}{31}<w(a,b,d,31)=1.0330\ldots=\gamma_1.
\end{equation*}
Now $b/a=15/8>\gamma_1^2$ so we will take $\gamma_2=15/8$. We now set
\begin{equation*}
\gamma_3=v(a,d,\gamma_1,\gamma_2)=0.32555\ldots
\end{equation*}
and we find that the left hand side of (\ref{russell}) is $49.67\ldots$ and the right hand side is $2.852\ldots$ so the inequality fails and we have eliminated this quadruple.

When coded in Pari, Algorithm \ref{alg:2} takes less than $20$ minutes on a single core to determine that none of the quadruples in our initial list is admissible. This proves Theorem \ref{Main}.

\begin{algorithm}\label{alg:2}
\SetAlgoLined
\For{$\{a,b,c,d\}\in$ \rm{initial list}}{
   $n_0\leftarrow$ smallest $n$ such that $\frac{n+1}{n}<w(a,b,d,n)$\;
   $\gamma_1\leftarrow w(a,b,d,n_0)$\;
   $\gamma_2\leftarrow\max\left(\frac{b}{a},\gamma_1^2\right)$\;
   $\gamma_3\leftarrow v(a,d,\gamma_1,\gamma_2)$\;
 \lIf{$a,b,d,\gamma_3$ {\rm satisfy inequality (\ref{russell})}}{{{\bf output} $\{a,b,c,d\}$}}}
\caption{Pruning the initial list.}
\end{algorithm}

\end{document}